\date{}
\renewcommand{\uppercasenonmath}[1]{}
\theoremstyle{plain}
\newtheorem{theorem}{Theorem}[section]
\newtheorem{proposition}[theorem]{Proposition}
\newtheorem{lemma}[theorem]{Lemma}
\newtheorem{corollary}[theorem]{Corollary}
\newtheorem*{open question}{Open Question}
\newtheorem{definition}[theorem]{Definition}
\theoremstyle{definition}
\theoremstyle{remark}
\newcommand{\Id}{\mathrm{Id}}
\def\Hom{{\rm Hom}}
\def\Ker{{\rm Ker}}
\def\Im{{\rm Im}}
\begin{document}
\begin{center}
{\large  \bf Some remarks on  injective envelopes on ring extensions}

\vspace{0.5cm}   Xiaolei Zhang$^{a}$

{\footnotesize
	School of Mathematics and Statistics, Shandong University of Technology,
	Zibo 255049, China\\
	
	E-mail: zxlrghj@163.com\\}

\end{center}

\bigskip
\centerline { \bf  Abstract}
\bigskip
\leftskip10truemm \rightskip10truemm \noindent

Let $f:S\rightarrow R$ be a ring extension.  We introduce and study the properties of  $(R, S)_\star$-injective modules and the existences of $(R, S)_\star$-injective envelopes. Besides, we show that  every $R$-module has an $(R, S)$-injective envelope when $S$ is a pure-semisimple ring, partly resolving a conjecture proposed by Guo \cite{G24}.
\vbox to 0.3cm{}\\
{\it Key Words:}  ring extension, $(R, S)_\star$-injective module, $(R, S)_\star$-injective  envelope, $(R, S)$-injective  envelope.\\
{\it 2020 Mathematics Subject Classification:} 16D50.

\leftskip0truemm \rightskip0truemm
\bigskip

\section{Introduction}
Throughout this paper, $f:S\rightarrow R$ always denotes a ring extension, and all modules are left modules unless otherwise stated. Let $M$ be an $R$-module. We denote by $\Id_M$ the identical automorphism of $M$.

Early in 1956, Hochschild \cite{H56} built the fundamental theory of relative homological algebra on a ring extension. In particular, he introduced the notions of relative projective modules and relative injective modules. Early researchers paid more attention on the relative homological algebras on semisimple extensions, separable extensions and Frobenius extensions \cite{H59, HS66}. In the past three decades,  some algebraists pointed out that there are some connections of relative global dimensions of Artin algebras and the finitistic dimension conjecture \cite{AS93,AS932,AS933,F91,G19,XX13}.

Recently, a series of studies have been conducted by Guo et al. on the classes of modules arised in relative homology algebras, including relative projective (free, flat injective) modules   \cite{G21, G22, G24}. In particular, Guo \cite{G24} defined the  relative injective envelopes of modules and  verified the  existences of relative injective envelopes of finitely generated module over an extension of Artin algebras. Subsequently, he left the following conjecture:

\textbf{Conjecture} \cite{G24}
Let $f :S\rightarrow  R$ be a ring extension. Then every $R$-module has a relative injective envelope.

In this paper, we first obtain that the conjecture is true for all ring extensions from a pure-semisimple ring. And then we introduce and  study introduce a new version of injective modules on a ring extension $f:S\rightarrow R$ by using purity over the ring $S$. In particular, we studies their enveloping properties in the last section.

\section{Relative injective envelopes on ring extensions from pure-semisimple rings}

Let  $f:S \rightarrow R$ be a ring extension. Then every $R$-module can be naturally regarded as an $S$-module.
Recall from \cite{H56} that a short exact sequence $0\rightarrow  U\xrightarrow{f}
V\xrightarrow{g}  W\rightarrow  0$ of $R$-modules is called \emph{$(R, S)$-exact} provided that it splits  as  $S$-modules.
An $R$-module $M$ is said to be \emph{$(R, S)$-injective}, if for every $(R, S)r$-exact sequence $0\rightarrow  U\xrightarrow{u}
V\xrightarrow{v}  W\rightarrow  0$
and every $R$-homomorphism $t :U\rightarrow M$, there is an $R$-homomorphism $s :V \rightarrow M$
such that $t=su$, that is, the following commutative diagram with the bottom row $(R, S)$-exact can be completed:
$$\xymatrix@R=30pt@C=50pt{	&& M&&\\
	0\ar[r]&U\ar[r]_{u}\ar[ru]^{t} &V\ar@{.>}[u]_{s}\ar[r]_v&W\ar[r]&0 \\
}$$
Similarly, the notions of    \emph{$(R, S)$-projective} (resp.,  \emph{$(R, S)$-flat})  modules are defined to be $R$-modules which are projective  (resp.,  flat) with respect to  all short $(R, S)$-exact sequences (see \cite{G22}). It was proved in \cite{H56} that $\Hom_S(R,M)$ (resp., $M\otimes_SR$) is always $(R, S)$-injective (resp., $(R, S)$-projective) for any left (resp., right) $S$-module $M$.

Recently, Guo \cite{G24} defined
\emph{$(R, S)$-injetive envelope}  of an $R$-module $M$: An $(R, S)$-monomorphism  $\sigma:M\rightarrow E$ with $E$ $(R,S)$-injective is an $(R, S)$-injetive envelope if it is left minimal, i.e., every endomorphism $h$ of $E$ such that $\sigma=h\sigma$  is an automorphism.

The author in \cite{G24} pointed out that suppose $f:S\rightarrow R$ is an extension of Artin algebras.
Then every finitely generated $R$-module has an  $(R, S)$-injective envelope. Subsequently, he proposed the following conjecture:

\textbf{Conjecture} \cite{G24}
Let $f :S\rightarrow  R$ be a ring extension. Then every $R$-module has an $(R, S)$-injective envelope.\\
We will show the conjecture is true for any ring extension from a pure-semisimple ring. First we recall the classical notions of envelopes and preenvelopes.

Let $M$ be an $R$-module and $\mathscr{C}$ be a  class  of $R$-modules. Recall from \cite{EJ11} that an $R$-homomorphism $f:M\rightarrow C$ with  $C\in \mathscr{C}$ is said to be a  \emph{$\mathscr{C}$-preenvelope}  provided that  for any $C'\in \mathscr{C}$, the natural homomorphism  $\Hom_{R}(C,C')\rightarrow \Hom_{R}(M,C')$ is an epimorphism, that is,  for any $R$-homomorphism $f':M\rightarrow C'$ with $C'\in \mathscr{C}$, there is an $R$-homomorphism $h:C\rightarrow C'$ such that the following diagram is commutative:$$\xymatrix@R=20pt@C=50pt{M\ar[r]^{f}\ar[rd]_{f'} &C\ar@{.>}[d]^{h} \\
	& C'\\}$$
If, moreover, every endomorphism $h$ such that $f=hf$  is an automorphism, that is, the commutativity of
$$\xymatrix@R=20pt@C=50pt{M\ar[r]^{f}\ar[rd]_{f} &C\ar@{.>}[d]^{h} \\
	& C\\}$$ implies $h$ is an automorphism,  then $f$ is said to be a  \emph{$\mathscr{C}$-envelope}.

Let $M$ be an $R$-module. We always write by  $M^{+}:=\Hom_\mathbb{Z}(M,\mathbb{Q}/\mathbb{Z})$. We always set $\epsilon:M\rightarrow  \Hom_S(R,M)$ to be the $R$-homomorphism satisfying $\epsilon(m)(r)=rm$ for any $m\in M$ and $r\in R$. Then $\epsilon$ is an $(R, S)$-monomorphism.
\begin{proposition}\label{equal-c}
	Let $\mathscr{E}_{(R,S)}$ be the  class  of all $(R,S)$-injective modules. Suppose $\sigma:M\rightarrow E$ is an $R$-homomorphism with $E\in \mathscr{E}_{(R,S)}$. Then $\sigma$ is an  $\mathscr{E}_{(R,S)}$-envelope if and only if it is an $(R,S)$-injective envelope.
\end{proposition}
\begin{proof} Let $\sigma:M\rightarrow E$ be an $\mathscr{E}_{(R,S)}$-envelope of $M$. Then $\sigma$ is an $(R,S)$-monomorphism. Indeed, let $\epsilon:M\rightarrow\Hom_S(R,M)$ be the natural $R$-homomorphism. Since $\Hom_S(R,M)$ is  $(R,S)$-injective, then there is an $R$-homomorphism $h:E\rightarrow\Hom_S(R,M)$ such that the following diagram  commutative
	$$\xymatrix@R=20pt@C=50pt{M\ar[r]^{\sigma}\ar@{_{(}->}[rd]_{\epsilon} &E\ar@{.>}[d]^{h} \\
		& \Hom_S(R,M)\\}$$
	Since $\epsilon$ is an 	$(R,S)$-monomorphism, so is  $\sigma$. Since $\sigma$ is left minimal and $E$ is $(R,S)$-injective, $\sigma$ is also an  $(R,S)$-injective envelope.
	
	On the other hand, suppose $\sigma:M\hookrightarrow E$ is an  $(R,S)$-injective envelope. Then for any   $R$-homomorphism $f':M\rightarrow E'$ with $E'\in \mathscr{E}_{(R,S)}$ there is an $R$-homomorphism $h:E\rightarrow E'$ such that the following diagram is commutative:$$\xymatrix@R=20pt@C=50pt{M\ar@{^{(}->}[r]^{\sigma}\ar[rd]_{f'} &E\ar@{.>}[d]^{h} \\
		& E'\\}$$
	Note that $\sigma$ is left minimal. Consequently, $\sigma$ is an $\mathscr{E}_{(R,S)}$-envelope of $M$.	
\end{proof}

To study the enveloping properties of $(R,S)$-injective modules,  we  recall the notion of injective structure on $R$-modules (see \cite{EJ11}). Let $\mathscr{M}$ be a class of $R$-homomorphisms  and $\mathscr{F}$ be a class of $R$-modules. A pair $(\mathscr{M},\mathscr{F})$ is called an \emph{injective structure} on $R$-modules provided that
\begin{enumerate}
	\item  $F\in \mathscr{F}$ if and only if $\Hom_R(N,F)\rightarrow \Hom_R(M,F)\rightarrow 0$ is exact for any $M\rightarrow N\in \mathscr{M}$
	\item $M\rightarrow N\in \mathscr{M}$ if and only if $\Hom_R(N,F)\rightarrow \Hom_R(M,F)\rightarrow 0$ is exact for any $F\in \mathscr{F}$.
	\item $\mathscr{F}$ is preenveloping.
\end{enumerate}

Let $\mathscr{P}$ be a class of right $R$-modules. We say that $(\mathscr{M},\mathscr{F})$ is \emph{determined by $\mathscr{P}$} provided that the following condition holds:
\begin{center}
	$M\rightarrow N\in \mathscr{M}$ if and only if $0\rightarrow G\otimes_RM\rightarrow G\otimes_RN$ is exact for any $G\in \mathscr{P}$.
\end{center}
It follows from \cite[Theorem 6.6.4]{EJ11} that if an injective structure $(\mathscr{M},\mathscr{F})$ is determined by a class $\mathscr{P}$, then $\mathscr{F}$ is enveloping.

\begin{proposition}\label{injstru}
	Let $\mathscr{E}_{(R,S)}$ be  the  class  of all $(R,S)$-injective modules and $\mathscr{A}_{(R,S)}$  the  class  of all $(R,S)$-monomorphisms. Then $(\mathscr{A}_{(R,S)},\mathscr{E}_{(R,S)})$ is an injective structure on $R$-modules.
\end{proposition}
\begin{proof}  Trivially, $F\in \mathscr{E}_{(R,S)}$ if and only if $\Hom_R(N,F)\rightarrow \Hom_R(M,F)\rightarrow 0$ is exact for any $M\rightarrow N\in \mathscr{A}_{(R,S)}$; and if $M\rightarrow N\in \mathscr{A}_{(R,S)}$, then  $\Hom_R(N,F)\rightarrow \Hom_R(M,F)\rightarrow 0$ is exact for any $F\in \mathscr{E}_{(R,S)}$.
	
	Now let $\sigma:M\rightarrow N$ be an $R$-homomorphism such that $\Hom_R(N,F)\rightarrow \Hom_R(M,F)\rightarrow 0$ is exact for any $F\in \mathscr{E}_{(R,S)}$. Let $\epsilon:M\rightarrow\Hom_S(R,M)$ be the natural $R$-homomorphism. Since $\Hom_S(R,M)\in \mathscr{E}_{(R,S)}$,  there is an $R$-homomorphism $h:N\rightarrow \Hom_S(R,M)$ such that the following diagram is commutative:
	$$\xymatrix@R=40pt@C=50pt{M\ar[r]^{\sigma}\ar@{_{(}->}[rd]_{\epsilon} &N\ar@{.>}[d]^{h} \\
		& \ar@/_1pc/[lu]_{\epsilon'}\Hom_S(R,M)\\}$$
Let $\epsilon':\Hom_S(R,M)\rightarrow\Hom_S(S,M)\cong M$	be the natural $S$-homomorphism. Then $\epsilon'\epsilon=\Id_M$. Hence $\epsilon'h\sigma=\epsilon'\epsilon=\Id_M.$
	Consequently, $\sigma\in \mathscr{A}_{(R,S)}$.
	
	Let $M$ be an $R$-module. Then the natural $R$-homomorphism $\epsilon:M\rightarrow\Hom_S(R,M)$  is clearly an $\mathscr{E}_{(R,S)}$-preenvelope of $M$. Consequently, $(\mathscr{A}_{(R,S)},\mathscr{E}_{(R,S)})$ is an injective structure on $R$-modules.
\end{proof}

\begin{theorem}\label{psprsijn}
	Let $S$ be a pure-semisimple ring and $f:S\rightarrow R$ be a ring extension. Then injective structure $(\mathscr{A}_{(R,S)},\mathscr{E}_{(R,S)})$ is determined by $$\mathscr{F}=\{F\otimes_SR\mid F\ \mbox{is a finitely presented right}\ S\mbox{-module}\}.$$ Consequently, every $R$-module has an $(R, S)$-injective envelope.
\end{theorem}
\begin{proof} Set $$\mathscr{F}=\{F\otimes_SR\mid F\ \mbox{is a finitely presented right}\ S\mbox{-module}\}.$$
	We will show $(\mathscr{A}_{(R,S)},\mathscr{E}_{(R,S)})$ is determined by $\mathscr{F}$.
	It follows by \cite[Lemma 2]{H56} that $R\otimes_SF$ is $(R,S)$-projective, and thus $(R,S)$-flat by \cite[Proposition 3.2]{G22}. Hence if $M\rightarrow N$ is an $(R,S)$-monomorphism, then $0\rightarrow G\otimes_RM\rightarrow G\otimes_RN$ is exact for any $G\in \mathscr{F}$. On the other hand, suppose $0\rightarrow G\otimes_RM\rightarrow G\otimes_RN$ is exact for any $G\in \mathscr{F}$. Considering the following commutative diagram of exact sequences:
	$$\xymatrix@R=20pt@C=50pt{
		0\ar[r]	&(F\otimes_SR)\otimes_RM\ar[r]^{}\ar[d]^{\cong} &(F\otimes_SR)\otimes_RN\ar[d]^{\cong} \\
		0\ar[r]&F\otimes_SM\ar[r]	& F\otimes_SN\\}$$
	we have  $M\rightarrow N$ is a pure monomorphism as $S$-modules.
	Since $S$ is a pure-semisimple ring, every pure exact sequence over $S$ splits. So $M\rightarrow N$ is an $(R,S)$-monomorphism. Thus $(\mathscr{A}_{(R,S)},\mathscr{E}_{(R,S)})$ is determined by $\mathscr{F}$.
	It follows from \cite[Theorem 6.6.4]{EJ11} that   $\mathscr{E}_{(R,S)}$ is enveloping. Hence every $R$-module has an $(R,S)$-injective envelope by Proposition \ref{equal-c}.
\end{proof}

\section{basic properties of $(R, S)_\star$-injective modules}
In this section, we introduce a new version of injective modules on a ring extension $f:S\rightarrow R$ by using purity over the ring $S$.

A short exact sequence $0\rightarrow  U\xrightarrow{f}
V\xrightarrow{g}  W\rightarrow  0$ of $R$-modules is called \emph{$(R, S)_\star$-exact} if it is pure  as  $S$-modules. In this case,  $f :U\rightarrow  V$ (resp., $g: V\rightarrow  W$)
is called an \emph{$(R, S)_\star$-monomorphism} (resp., \emph{$(R, S)_\star$-epimorphism}). An embedding map $i:M\hookrightarrow N$ is said to be \emph{$(R, S)_\star$-embedding}  if $i$ is an $(R, S)_\star$-monomorphism. In this case, $M$ is called an  \emph{$(R, S)_\star$-submodule} of $N$. And we say an $R$-homomorphism $a :M\rightarrow  N$ is an $(R, S)_\star$-homomorphism, denoted by $a\in\Hom_{(R,S)_\star}(M,N)$, if we have $i_a$ is an $(R, S)_\star$-monomorphism and $\pi_a$ is  an $(R, S)_\star$-epimorphism in the following natural factorization:

 $$\xymatrix@R=20pt@C=50pt{&\Im(a)\ar@{^{(}->}[rd]^{i_a}&\\
	M\ar@{->>}[ru]^{\pi_a}\ar[rr]_{a}& & N\\}$$

\begin{lemma}\label{rsmon} Suppose $a:M\rightarrow N$ and $b:N\rightarrow L$ are $R$-homomorphisms.
	\begin{enumerate}
		\item If $a$ and $b$ are $(R, S)_\star$-monomorphisms, then $ba$ is also an $(R, S)_\star$-monomorphism.
		\item If $a$ and $b$ are $(R, S)_\star$-epimorphisms, then $ba$ is also an $(R, S)_\star$-epimorphism.
		\item If $ba$ is an $(R, S)_\star$-monomorphism, then $a$ is also an $(R, S)_\star$-monomorphism.
		\item If $ba$ is an $(R, S)_\star$-epimorphism, then $b$ is also an $(R, S)_\star$-epimorphism.
		\item If $a$ is an $(R, S)_\star$-homomorphism and $b$ is a splitting epimorphism, then $ba$ is also an $(R, S)_\star$-homomorphism.
		\item If $a$ is a splitting monomorphism and $b$ is an  $(R, S)_\star$-homomorphism, then $ba$ is also an $(R, S)_\star$-homomorphism.	
	\end{enumerate}
\end{lemma}
\begin{proof} It is an easy exercise, and left to readers.
\end{proof}


\begin{definition}
Let  $f:S \rightarrow R$ be a ring extension. Then an $R$-module $M$ is said to be \emph{$(R, S)_\star$-injective}, if for every $(R, S)_\star$-exact sequence $0\rightarrow  U\xrightarrow{u}
V\xrightarrow{v}  W\rightarrow  0$
and every $R$-homomorphism $t :U\rightarrow M$, there is an $R$-homomorphism $s :V \rightarrow M$
such that $t=su$, that is, the following  commutative diagram with the bottom row $(R, S)_\star$-exact can be completed:
$$\xymatrix@R=30pt@C=50pt{	&& M&&\\
	0\ar[r]&U\ar[r]_{u}\ar[ru]^{t} &V\ar@{.>}[u]_{s}\ar[r]_v&W\ar[r]&0 \\
}$$
\end{definition}
Trivially, every injective $R$-module is $(R, S)_\star$-injective. And if $S$ is a von Neumann regular ring, then  $(R, S)_\star$-injective modules are exactly  injective $R$-modules. If $R=S$, then $(R, S)_\star$-injective modules are exactly  pure-injective $R$-modules.

The following  proposition shows that the class of all $(R, S)_\star$-injective modules is also closed under direct summands and direct products.

\begin{proposition}\label{dirsprod}
	Suppose $\{M_i\mid i\in\Gamma\}$ is a family of $R$-modules. Then  $\prod\limits_{i\in\Gamma}M_i$ is  an $(R, S)_\star$-injective module if and only if each $M_i$ is $(R, S)_\star$-injective.
\end{proposition}
\begin{proof}
Let $M=\prod\limits_{i\in\Gamma}M_i$, $a_i:M_i\rightarrow M$ and $\pi_i:M\rightarrow M_i$ be the natural $R$-homomorphisms.
	Let  $0\rightarrow  U\xrightarrow{u}
	V\xrightarrow{v}  W\rightarrow  0$ be an  $(R, S)_\star$-exact sequence.
	
	Suppose  $M_i$ is $(R, S)_\star$-injective for each $i\in\Gamma$. Let $t:U\rightarrow M$ be an $R$-homomorphism.  Since each $M_i$ is  $(R, S)_\star$-injective, then there is an $R$-homomorphism $s:V\rightarrow M$ such that $\pi_it=s_iu$:
	$$\xymatrix@R=30pt@C=50pt{&& M\ar@/^/[d]^{\pi_i}&&\\	
		&& M_i\ar@/^/[u]^{a_i}&&\\
		0\ar[r]&U\ar[r]_{u}\ar@/^1pc/[ruu]^{t} &V\ar[u]^{s_i}\ar@/_3pc/@{.>}[uu]_{s}\ar[r]_v&W\ar[r]&0 \\
	}$$	
	Set $s=\prod\limits_{i\in\Gamma} s_i$. Then $t=su$. So $M$ is  $(R, S)_\star$-injective.
	
	Now suppose  $M$ is $(R, S)_\star$-injective. Let $i\in\Gamma$ and $w:U\rightarrow M_i$ be an $R$-homomorphism. 	
	Then there is an $R$-homomorphism $s:V\rightarrow M$ such that $a_iw=su$:
	$$\xymatrix@R=30pt@C=50pt{&& M\ar@/^/[d]^{\pi_i}&&\\	
		&& M_i\ar@/^/[u]^{a_i}&&\\
		0\ar[r]&U\ar[r]_{u}\ar[ru]^{w} &V\ar@{.>}[u]^t\ar@/_3pc/@{>}[uu]_{s}\ar[r]_v&W\ar[r]&0 \\
	}$$	
	Set $t=\pi_is$.	Then $w=tu$. So $M_i$ is  $(R, S)_\star$-injective.
\end{proof}

Let $M$ be an $R$-module,  $\epsilon:M\rightarrow  \Hom_S(R,M)$ and $\delta:M\rightarrow M^{++}$ be the natural $R$-homomorphisms. Then it is easy to very that  $\Hom_S(R,\delta): \Hom_S(R,M)\rightarrow\Hom_S(R,M^{++})$ is also an $(R, S)_\star$-monomorphism.	We always set the composition $$\varepsilon:=\Hom_S(R,\delta)\epsilon:M\rightarrow \Hom_S(R,M^{++}).$$ Then $\varepsilon$ is an  $(R, S)_\star$-monomorphism by Lemma \ref{rsmon}(1).

Let $f:S\rightarrow R$ be a ring extension and $M$ be an $R$-module. We always denote by $$M^{\ddag}:=\Hom_S(R,M^{++})$$ in the rest of this paper.
\begin{proposition}\label{ddag}
	Let $M$ be an $R$-module. Then $M^{\ddag}$ is an $(R, S)_\star$-injective module.
\end{proposition}
\begin{proof}
	Let  $0\rightarrow  U\xrightarrow{u}
	V\xrightarrow{v}  W\rightarrow  0$ be an  $(R, S)_\star$-exact sequence. Note that $M^{++}$ is also a pure-injective $S$-module. So $\Hom_S(u,M^{++}):\Hom_S(V,M^{++})\rightarrow \Hom_S(U,M^{++})$ is an $S$-epimorphism. Note that there is a natural $R$-isomorphism $$\Hom_R(N,\Hom_S(R,M^{++}))\cong\Hom_S(N,M^{++}).$$
	Using this  with $M=U$ and $M=V$, we have an 	$R$-epimorphism $$\Hom_{R}(u,M^{\ddag}):\Hom_{R}(V,M^{\ddag})\rightarrow \Hom_{R}(U,M^{\ddag})$$
	which means  $M^{\ddag}$ is an $(R, S)_\star$-injective module.
\end{proof}

\begin{theorem}\label{rsinj} Let $M$ is an $R$-module. Then the following statements are equivalent:
	\begin{enumerate}
		\item $M$ is an $(R, S)_\star$-injective module;
		\item every short $(R, S)_\star$-exact sequence starting at $M$ splits;
		\item  if for every $(R, S)_\star$-exact sequence $0\rightarrow  U\xrightarrow{u}
		V\xrightarrow{v}  W\rightarrow  0$
		and every $(R,S)_\star$-homomorphism $t :U\rightarrow M$, there is an $(R,S)_\star$-homomorphism $s :V \rightarrow M$
		such that $t=su$;
		\item the natural $R$-homomorphism $\varepsilon:M\rightarrow M^{\ddag}$ splits;	
		\item $M$ is a direct summand of $N^{\ddag}$ for some $R$-module $N$.
	\end{enumerate}
\end{theorem}

\begin{proof} $(1)\Rightarrow (2)$ and $(3)\Rightarrow (2)$: Follow by the completement of the following commutative of commutative diagram:
	$$\xymatrix@R=30pt@C=50pt{	&& M&&\\
		0\ar[r]&M\ar[r]_{m}\ar@{=}[ru]^{\Id_M} &V\ar@{.>}[u]_{s}\ar[r]_v&W\ar[r]&0 \\
	}$$
	
 $(2)\Rightarrow (1)$: 	Let  $0\rightarrow  U\xrightarrow{u}
 V\xrightarrow{v}  W\rightarrow  0$ be an  $(R, S)_\star$-exact sequence and $t:U\rightarrow M$ be an $R$-homomorphism. Then is is a pullback-pushout diagram:
 $$\xymatrix@R=30pt@C=50pt{	0\ar[r]&M\ar[r]^m& X\ar[r]^x&W\ar[r]&0\\
 	0\ar[r]&U\ar[r]_{u}\ar[u]^{t} &V\ar[u]_{s}\ar[r]_v&W\ar@{=}[u]\ar[r]&0 \\
 }$$
 Note that the top row is also $(R, S)_\star$-exact. So it splits. Let $m':X\rightarrow M$ be an $R$-homomorphism such that $m'm=\Id_M$. Then $m's$ is the required homomorphism.

 $(2)\Rightarrow (3)$: 	 Similar to  $(2)\Rightarrow (1)$. We only note Lemma \ref{rsmon}(5) holds and if $t$ is an $(R, S)_\star$-homomorphism, then so is $s$.

 $(5)\Rightarrow (1)$:  Follows by Proposition \ref{dirsprod} and Proposition \ref{ddag}.

  $(1)\Rightarrow (4)$: Since the natural $R$-homomorphism $\varepsilon:M\rightarrow M^{\ddag}$ is an $(R, S)_\star$-monomorphism, it splits.

   $(4)\Rightarrow (5)$: Trivial.
\end{proof}

\section{$(R, S)_\star$-injective envelopes}

Let  $f:S \rightarrow R$ be a ring extension. We call an $(R, S)_\star$-monomorphism  $\sigma:L\rightarrow M$ of $R$-modules  \emph{$(R, S)_\star$-essential} provided that every  $(R, S)_\star$-homomorphism $\tau:M\rightarrow N$ is an $(R, S)_\star$-monomorphism, whenever $\tau\sigma$ is an $(R, S)_\star$-monomorphism.

The following result shows that to check an $(R, S)_\star$-monomorphism is $(R, S)_\star$-essential, we only need to consider all $(R,S)_\star$-epimorphisms in the above definition.
\begin{lemma}\label{ess}
	An $(R, S)_\star$-monomorphism $\sigma:L\hookrightarrow M$ is  $(R, S)_\star$-essential if and only if every  $(R, S)_\star$-epimorphism $\tau:M\rightarrow N$ is an $R$-isomorphism, whenever $\tau\sigma$ is an $(R, S)_\star$-monomorphism.
\end{lemma}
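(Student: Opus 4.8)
\emph{The forward implication is formal.} If $\sigma$ is $(R,S)$-essential and $\tau\colon M\to N$ is an $(R,S)$-epimorphism with $\tau\sigma$ an $(R,S)$-monomorphism, then by the definition of $(R,S)$-essential $\tau$ is an $(R,S)$-monomorphism; but an $(R,S)$-epimorphism is surjective and an $(R,S)$-monomorphism is injective, so $\tau$ is a bijective $R$-homomorphism, hence an $R$-isomorphism.

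\emph{The backward implication carries the content.} Assume that every $(R,S)$-epimorphism out of $M$ whose composite with $\sigma$ is an $(R,S)$-monomorphism is an $R$-isomorphism, and let $\tau\colon M\to N$ be an arbitrary $R$-homomorphism with $\tau\sigma$ an $(R,S)$-monomorphism; I must show $\tau$ is an $(R,S)$-monomorphism. The plan is to produce a factorization $\tau=\beta\alpha$ with $\alpha\colon M\to Q$ an $(R,S)$-epimorphism and $\beta\colon Q\to N$ an $(R,S)$-monomorphism. Once this is in hand the proof closes mechanically: from $\beta(\alpha\sigma)=\tau\sigma$ being an $(R,S)$-monomorphism, Lemma~\ref{rsmon}(1) gives that $\alpha\sigma$ is an $(R,S)$-monomorphism, so the hypothesis applied to the $(R,S)$-epimorphism $\alpha$ forces $\alpha$ to be an $R$-isomorphism; then $\tau=\beta\alpha$ is a composite of an $R$-isomorphism with an $(R,S)$-monomorphism, hence an $(R,S)$-monomorphism.

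\emph{Building the factorization, and the main obstacle.} Fix an $S$-homomorphism $\lambda\colon N\to L$ with $\lambda\tau\sigma=\Id_L$. Then $\gamma:=\sigma\lambda\tau\colon M\to M$ is an idempotent $S$-endomorphism with $\gamma\sigma=\sigma$ and $\Im\gamma=\sigma(L)$, so $M=\sigma(L)\oplus\Ker\gamma$ as $S$-modules; likewise $\lambda$ splits $\tau\sigma$, so $N=\tau\sigma(L)\oplus\Ker\lambda$ as $S$-modules, with $\tau$ carrying $\sigma(L)$ isomorphically onto $\tau\sigma(L)$ and $\Ker\gamma=\Ker(\lambda\tau)$ into $\Ker\lambda$. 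Taking $Q=M/\Ker\tau\cong\Im\tau$, $\alpha$ the canonical surjection and $\beta$ the inclusion $\Im\tau\hookrightarrow N$, one gets the sought factorization exactly when $\Ker\tau$ is an $S$-direct summand of $M$ (making $\alpha$ an $(R,S)$-epimorphism) and $\Im\tau$ is an $S$-direct summand of $N$ (making $\beta$ an $(R,S)$-monomorphism). Verifying these two $S$-splittings is the crux of the argument, and is where the hypothesis and the supply of $(R,S)$-injective modules must be brought to bear: a convenient first step is to replace $N$ by $\Hom_S(R,N)$ along the canonical $(R,S)$-monomorphism $n\mapsto(r\mapsto rn)$ --- which alters neither whether $\tau$ is an $(R,S)$-monomorphism (apply Lemma~\ref{rsmon}(1) one way, closure under composition the other) nor whether $\tau\sigma$ is one --- so that one may assume $N$ is $(R,S)$-injective and use the structure of $\Hom_S(R,-)$ to manufacture the splittings. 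Everything else in the proof is formal manipulation with Lemma~\ref{rsmon} and the elementary fact that an $R$-homomorphism which is at once an $(R,S)$-monomorphism and an $(R,S)$-epimorphism is an $R$-isomorphism.
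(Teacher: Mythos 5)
Your overall strategy coincides with the paper's: dispose of the forward implication as formal, and prove the backward implication by factoring $\tau$ through its image as an $(R,S)$-epimorphism followed by an $(R,S)$-monomorphism, then apply Lemma~\ref{rsmon}(1) and the hypothesis to the epimorphic part. The difficulty is that you stop exactly at the decisive point. You correctly isolate the two facts that make the image factorization usable --- that $\Ker\tau$ is an $S$-direct summand of $M$ (so that $\pi_\tau\colon M\twoheadrightarrow\Im\tau$ is an $(R,S)$-epimorphism) and that $\Im\tau$ is an $S$-direct summand of the codomain (so that $i_\tau$ is an $(R,S)$-monomorphism) --- you call them ``the crux'', and then you do not prove them. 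The proposed reduction to an $(R,S)$-injective codomain via $n\mapsto(r\mapsto rn)$ is harmless but does not by itself manufacture either splitting, and the decompositions you do establish, $M=\sigma(L)\oplus\Ker\gamma$ and $N=\tau\sigma(L)\oplus\Ker\lambda$, only give $\Ker\tau\subseteq\Ker\gamma$; they do not show that $\Ker\tau$ itself splits off over $S$. As written, the proposal is an outline whose central step is missing.

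For comparison, the paper discharges this step by writing down explicit candidate splittings: with $\delta'$ an $S$-retraction of $\tau\sigma$, it asserts that $\pi_\tau\sigma\delta'$ is a retraction of $i_\tau\colon\Im\tau\hookrightarrow K$ and that $\sigma\delta' i_\tau$ is a section of $\pi_\tau$. You should not take these on faith: both composites reduce to the map $y\mapsto\tau\sigma\delta'(y)$ on $\Im\tau$, which is the identity on $\tau\sigma(L)$ because $\delta'\tau\sigma=\Id_L$, but is the identity on all of $\Im\tau$ only if $\tau\sigma\delta'\tau=\tau$, which does not follow from $\delta'\tau\sigma=\Id_L$ alone. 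So your instinct that these two $S$-splittings are where the real content lies is correct, and it is precisely the point that a complete proof must settle; your write-up (like the formulas you would be tempted to borrow) does not settle it.
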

\begin{proof}
	Let $\tau:M\rightarrow K$ be an $(R, S)_\star$-homomorphism such that $L\stackrel{ \sigma}{\hookrightarrow}  M\xrightarrow{\tau} K$ is an  $(R, S)_\star$-monomorphism, i.e., the natural composition $$\delta:L\stackrel{ \sigma}{\hookrightarrow}  M\stackrel{ \pi_\tau}{\twoheadrightarrow} \Im(\tau)\stackrel{ i_\tau}{\hookrightarrow} K$$ is an $(R, S)_\star$-monomorphism,
	It follows by Lemma \ref{rsmon}(3) that  $L\stackrel{ \sigma}{\hookrightarrow}  M\stackrel{ \pi_\tau}{\twoheadrightarrow} \Im(\tau)$ is also an $(R, S)_\star$-monomorphism.
	By assumption, $\pi_\tau$ is an $R$-isomorphism. Consequently,
	$\tau=i_\tau\pi_\tau$ is an $(R, S)_\star$-monomorphism. Hence
	$\sigma$ is  $(R, S)_\star$-essential.	
\end{proof}

Let $f:S\rightarrow R$ be a ring extension, $M$ be an $R$-module and $\mathscr{C}$ be a  class  of $R$-modules. We call an $(R,S)_\star$-homomorphism $f:M\rightarrow C$ with  $C\in \mathscr{C}$ is said to be an  \emph{$(R,S)_\star$-$\mathscr{C}$-preenvelope}  provided that  for any $(R,S)_\star$-homomorphism $f':M\rightarrow C'$ with $C'\in \mathscr{C}$, there is an $(R,S)_\star$-homomorphism $h:C\rightarrow C'$ such that the following diagram is commutative:$$\xymatrix@R=20pt@C=50pt{M\ar[r]^{f}\ar[rd]_{f'} &C\ar@{.>}[d]^{h} \\
	& C'\\}$$
If, moreover, every $(R,S)_\star$-endomorphism $h$ such that $f=hf$  is an automorphism ($f$ is said to be left $(R,S)_\star$-minimal), that is, the commutativity of
$$\xymatrix@R=20pt@C=50pt{M\ar[r]^{f}\ar[rd]_{f} &C\ar@{.>}[d]^{h} \\
	& C\\}$$ implies $h$ is an automorphism,
then $f$ is said to be an
\emph{$(R,S)_\star$-$\mathscr{C}$-envelope}.

\begin{theorem}\label{equal}
	Let $\mathscr{E}_{(R,S)_\star}$ be the  class  of all $(R, S)_\star$-injective modules. Suppose $\sigma:M\rightarrow E$ is an $R$-homomorphism with $E\in \mathscr{E}_{(R,S)_\star}$. Then $\sigma$ is an  $(R,S)_\star$-$\mathscr{E}_{(R,S)_\star}$-envelope if and only if it is an $(R, S)_\star$-essential $(R, S)_\star$-monomorphism.
\end{theorem}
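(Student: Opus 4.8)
\textbf{Proof proposal for Theorem \ref{equal}.}

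The plan is to show the two notions coincide by unwinding definitions in both directions, using the characterization of $(R,S)$-injective envelopes via $(R,S)$-essential monomorphisms (\cite[Proposition 1.1]{G24}) together with Lemma \ref{dirssm} and the construction in Theorem \ref{main}. First I would observe that an $\mathscr{E}_{(R,S)}$-preenvelope and an $(R,S)$-monomorphism into an $(R,S)$-injective module are, in the presence of $E$ being $(R,S)$-injective, closely linked: if $\sigma:M\to E$ is an $(R,S)$-monomorphism with $E$ $(R,S)$-injective, then for any $(R,S)$-injective $E'$ and any $R$-map $f':M\to E'$, the $(R,S)$-exactness of $0\to M\xrightarrow{\sigma} E\to\coker\sigma\to 0$ (which is $(R,S)$-exact precisely because $\sigma$ is an $(R,S)$-monomorphism) lets us extend $f'$ along $\sigma$ using $(R,S)$-injectivity of $E'$; hence $\sigma$ is automatically an $\mathscr{E}_{(R,S)}$-preenvelope. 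So the left-minimality condition is the only thing at issue, and in both definitions ``$(R,S)$-injective envelope'' and ``$\mathscr{E}_{(R,S)}$-envelope'' the left-minimality requirement is literally the same statement (every $R$-endomorphism $h$ of $E$ with $h\sigma=\sigma$ is an automorphism).

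Thus the real content is: (i) an $\mathscr{E}_{(R,S)}$-envelope is an $(R,S)$-monomorphism; and (ii) conversely, an $(R,S)$-injective envelope is an $\mathscr{E}_{(R,S)}$-preenvelope (which, combined with the matching left-minimality, gives the ``only if'' direction). For (ii), I just argued $(R,S)$-monomorphisms into $(R,S)$-injectives are preenvelopes, and an $(R,S)$-injective envelope is by definition such a monomorphism, so (ii) is immediate; left-minimality is shared, so every $(R,S)$-injective envelope is an $\mathscr{E}_{(R,S)}$-envelope. For (i), suppose $\sigma:M\to E$ is an $\mathscr{E}_{(R,S)}$-envelope. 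Using Theorem \ref{main}, pick an $(R,S)$-injective envelope $\iota:M\hookrightarrow N$ (an $(R,S)$-essential $(R,S)$-monomorphism with $N$ $(R,S)$-injective). By the $\mathscr{E}_{(R,S)}$-preenvelope property of $\sigma$ applied to $\iota$, there is $h:E\to N$ with $h\sigma=\iota$; by the preenvelope property of $\iota$ (which by (ii) it has, since it is an $(R,S)$-monomorphism into an $(R,S)$-injective) applied to $\sigma$, there is $g:N\to E$ with $g\iota=\sigma$. Then $(gh)\sigma=g\iota=\sigma$, so left-minimality of $\sigma$ forces $gh$ to be an automorphism of $E$; in particular $g$ is a split epimorphism and $h$ is a split monomorphism of $R$-modules. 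Also $(hg)\iota=h\sigma=\iota$; here I would invoke that $\iota$, being an $(R,S)$-essential monomorphism, is left minimal as $R$-modules (this is exactly the equivalence in \cite[Proposition 1.1]{G24} read backwards, or can be seen directly: $hg$ restricted appropriately together with $(R,S)$-essentiality forces it to be an isomorphism), so $hg$ is an automorphism of $N$ as well. Hence $h:E\to N$ is an $R$-isomorphism. Since $N$ is $(R,S)$-injective and $E\cong N$, $E$ is $(R,S)$-injective, and $\sigma=h^{-1}\iota$ is the composite of the $(R,S)$-monomorphism $\iota$ with an isomorphism, hence an $(R,S)$-monomorphism; moreover $\sigma$ inherits $(R,S)$-essentiality from $\iota$ through the isomorphism $h$. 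By \cite[Proposition 1.1]{G24}, $\sigma$ is an $(R,S)$-injective envelope.

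The main obstacle I anticipate is the careful bookkeeping in step (i): one must be sure that the map $g:N\to E$ supplied by the preenvelope property of $\iota$ really exists, which needs $\iota$ to be a preenvelope, and this in turn relies on the (perhaps slightly non-obvious, but elementary) fact established above that \emph{any} $(R,S)$-monomorphism into an $(R,S)$-injective module is an $\mathscr{E}_{(R,S)}$-preenvelope. Once that lemma is isolated and proved cleanly, the rest is the standard ``two maps whose composites are left-minimal-forced-to-be-automorphisms'' argument, and the transfer of $(R,S)$-essentiality across an $R$-isomorphism is routine from the definition of $(R,S)$-essential (or from Lemma \ref{ess}). I would present the preenvelope lemma first, then handle the ``if'' direction in one line, then spend the bulk of the proof on the ``only if'' direction as above.
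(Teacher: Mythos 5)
Your proposal is correct, and its two shared ingredients match the paper exactly: the observation that any $(R,S)$-monomorphism into an arbitrary module is an $\mathscr{E}_{(R,S)}$-preenvelope (extend $f'$ along the $(R,S)$-exact sequence $0\to M\to E\to\coker\sigma\to 0$ using $(R,S)$-injectivity of the target), and the observation that the two left-minimality conditions are literally identical, which together dispose of the ``injective envelope $\Rightarrow$ envelope'' direction just as in the paper. Where you diverge is the converse, i.e.\ showing that an $\mathscr{E}_{(R,S)}$-envelope $\sigma:M\to E$ is an $(R,S)$-monomorphism. You invoke Theorem \ref{main} to produce an $(R,S)$-injective envelope $\iota:M\to N$ and run the standard comparison argument ($h\sigma=\iota$, $g\iota=\sigma$, both composites forced to be automorphisms by left minimality, hence $h$ is an isomorphism and $\sigma=h^{-1}\iota$ inherits being an $(R,S)$-essential monomorphism). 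This is valid, and it yields uniqueness of envelopes up to isomorphism as a byproduct, but it leans on the full strength of Theorem \ref{main} (Zorn's lemma, the maximal-essential-extension construction). The paper's route is lighter and self-contained: apply the preenvelope property of $\sigma$ to the single canonical $(R,S)$-monomorphism $\epsilon:M\to\Hom_S(R,M)$ into the $(R,S)$-injective module $\Hom_S(R,M)$, obtaining $h$ with $h\sigma=\epsilon$, and then Lemma \ref{rsmon}(1) immediately gives that $\sigma$ is an $(R,S)$-monomorphism; since $E$ is $(R,S)$-injective and $\sigma$ is left minimal, it is an $(R,S)$-injective envelope by definition. Both arguments are sound; the paper's avoids any appeal to the existence theorem, which is preferable if one wants Theorem \ref{equal} to stand independently of Theorem \ref{main}.
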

\begin{proof} Let $\sigma:M\rightarrow E$ be an $(R,S)_\star$-$\mathscr{E}_{(R,S)_\star}$-envelope of $M$. Then $\sigma$ is an $(R, S)_\star$-monomorphism. Indeed, let $\varepsilon:M\rightarrow M^{\ddag}$ be the natural $R$-homomorphism. Since $M^{\ddag}$ is  $(R, S)_\star$-injective, then there is an $(R,S)_\star$-homomorphism $h:E\rightarrow M^{\ddag}$ such that the following diagram  commutative
	$$\xymatrix@R=20pt@C=50pt{M\ar[r]^{\sigma}\ar@{_{(}->}[rd]_{\varepsilon} &E\ar@{.>}[d]^{h} \\
		& M^{\ddag}\\}$$
	Since $\varepsilon$ is an $(R, S)_\star$-monomorphism, so is  $\sigma$ by Lemma \ref{rsmon}(3).
	We claim that $\sigma$ is $(R, S)_\star$-essential. Indeed, let $\tau:E\rightarrow N$ be an $(R, S)_\star$-epimorphism such that $\tau\sigma$ is an $(R, S)_\star$-monomorphism. Then there is an $(R,S)_\star$-homomorphism $\lambda: N\rightarrow E$ as $E$ is $(R, S)_\star$-injective.
	
	$$\xymatrix@R=20pt@C=50pt{
		&E\ar@{->>}[d]^{\tau}\\
		M\ar[r]^{\tau\sigma}\ar[ru]^{\sigma}\ar[rd]_{\sigma} &N\ar[d]^{\lambda} \\
		& E\\}$$
	Since $\sigma$ is left $(R,S)_\star$-minimal, $\lambda\tau$ is an isomorphism. So $\tau$ is an $(R, S)_\star$-monomorphism by Lemma \ref{rsmon}(3), and hence $\tau$ is  an isomorphism. Consequently,  $\sigma$ is $(R, S)_\star$-essential by Lemma \ref{ess}.
	
	On the other hand, suppose $\sigma:M\hookrightarrow E$ is an $(R, S)_\star$-essential $(R, S)_\star$-monomorphism with $E$ an  $(R, S)_\star$-injective module. Then by Theorem \ref{rsinj} for any   $(R,S)_\star$-homomorphism $f':M\rightarrow E'$ with $E'\in \mathscr{E}_{(R,S)_\star}$, there is an $(R,S)_\star$-homomorphism $h:E\rightarrow E'$ such that the following diagram is commutative:$$\xymatrix@R=20pt@C=50pt{M\ar@{^{(}->}[r]^{\sigma}\ar[rd]_{f'} &E\ar@{.>}[d]^{h} \\
		& E'\\}$$
	We claim that $\sigma$ is left $(R,S)_\star$-minimal. Indeed, let $\tau:E\rightarrow E$ be an  $(R,S)_\star$-homomorphism such that $\tau\sigma=\sigma$. Since $\sigma$ is $(R,S)_\star$-essential, $\tau$ is an $(R,S)_\star$-monomorphism. Consider the $(R,S)_\star$-exact sequence $0\rightarrow E\xrightarrow{\tau} E\rightarrow C\rightarrow0$. Since $E$ is $(R,S)_\star$-injective, there is an $(R,S)_\star$-homomorphism $\tau':E\rightarrow E$ such that $\tau'\tau=\Id_E$ by Theorem \ref{rsinj}. So $E=\Im(\tau)\oplus\Ker(\tau')$ as $R$-modules. Note that $\Im(\sigma)\subseteq\Im(\tau)$ and we have the following commutative diagram:
	$$\xymatrix@R=20pt@C=50pt{&\Im(\tau)\oplus\Ker(\tau')\ar@{.>}[dd]^{\delta:=\begin{pmatrix}
				1 & 0\\
				0 & 0\\
		\end{pmatrix}}\\	
		M\ar@{^{(}->}[ru]^{\sigma}\ar@{_{(}->}[rd]_{\sigma} &
		\\
		&\Im(\tau)\oplus\Ker(\tau')\\}$$
	Since $\sigma$ is $(R,S)_\star$-essential, $\delta$ is a monomorphism and so $\Ker(\tau')=0$. Hence $E=\Im(\tau)$ which implies that $\tau$ is also an epimorphism, and so is an isomorphism.  Therefore $\tau$ is left $(R,S)_\star$-minimal.   Consequently, $\sigma$ is an $\mathscr{E}_{(R,S)_\star}$-envelope of $M$.	
\end{proof}

Let $\mathscr{E}_{(R,S)_\star}$ be the  class  of all $(R, S)_\star$-injective modules. Then we always call   $(R,S)_\star$-$\mathscr{E}_{(R,S)_\star}$-envelopes $(R, S)_\star$-injective  $(R,S)_\star$-envelopes.
The following main theorem of this section shows that  $(R, S)_\star$-injective $(R, S)_\star$-envelopes always exist.
\begin{theorem}\label{main} Let $f:S\rightarrow R$ be a ring extension. Then every $R$-module has an $(R, S)_\star$-injective  $(R,S)_\star$-envelope.
\end{theorem}
\begin{proof}  Let $M$ be an $R$-module. Then $M^{\ddag}$ is an $(R, S)_\star$-injective module by Proposition \ref{ddag}. Note that natural map $\varepsilon:M\rightarrow M^{\ddag}$ is an  $(R, S)_\star$-monomorphism. So we can see $M$ as an $R$-submodule of $ M^{\ddag}$ by $\varepsilon$. Set $$\Lambda=\{M\stackrel{ \sigma}{\hookrightarrow} N\mid N\leq M^{\ddag},\ \sigma\  \mbox{is an}\  (R,S)_\star\mbox{-essential}\ (R,S)_\star\mbox{-embedding map}\}.$$	
	Since $\Id_M\in \Lambda$, 	$\Lambda$ is not empty. We order $\Lambda$ as follows: let $\sigma_i:M\rightarrow N_i$ and  $\sigma_j:M\rightarrow N_j$ in $\Lambda$. Define $\sigma_i\leq \sigma_j$ if and only if there is an $(R, S)_\star$-embedding map $\delta_{ij}$ such that the following diagram is  commutative:	$$\xymatrix@R=20pt@C=50pt{M\ar@{^{(}->}[r]^{\sigma_i}\ar@{_{(}->}[rd]_{\sigma_j} &N_i\ar@{^{(}->}[d]^{\delta_{ij}} \\
		& N_j\\}$$
	
	\textbf{Claim 1: $\Lambda$ has a maximal element} $\boldsymbol{\sigma:M\hookrightarrow N}$\textbf{.} Indeed, let $\Gamma=\{\sigma_i\mid i\in A\}$ be a total subset of 	$\Lambda$. Then $\{N_i,\delta_{ij}\mid i\leq j\in A\}$ is a directed system of $R$-modules. Set $N=\lim\limits_{\longrightarrow A}N_i=\bigcup\limits_{i\in A}N_i$ and $\sigma=\lim\limits_{\longrightarrow A}\sigma_i:M\rightarrow N$. 
	It follows by \cite[Corollary 2.21(1)]{GT12} that $\sigma$ is also an $(R, S)_\star$-embedding map. We will show that $\sigma$ is $(R, S)_\star$-essential. Let $\tau:N\rightarrow L$ be an $(R,S)_\star$-homomorphism such that  composition map $\phi:M\stackrel{ \sigma}{\hookrightarrow} N\xrightarrow{\tau} L$ is an $(R, S)_\star$-monomorphism.  For each $i,j\in A$, consider the following commutative diagram:
	
	$$\xymatrix@R=30pt@C=60pt{&N_i\ar@{_{(}->}[d]_{\delta_{ij}}\ar[rdd]^{\tau\delta_i}\ar@{^{(}->}@/^1pc/[dd]^{\delta_{i}}&\\ &N_j\ar@{_{(}->}[d]_{\delta_j}\ar[rd]^{\tau\delta_j}&\\
		M\ar[ruu]^{\sigma_i}\ar[ru]^{\sigma_j}\ar[r]_{\sigma}&N\ar[r]_{\tau}	& L\\}$$
	Since each $\sigma_{i}$ is $(R, S)_\star$-essential and $\tau\delta_{i}\sigma_{i}=\tau\sigma$ is an $(R, S)_\star$-monomorphism, we have $\tau\delta_{i}$ is also an $(R, S)_\star$-monomorphism.  Hence $\tau=\lim\limits_{\longrightarrow A} \tau\delta_{i}$ is also an $(R, S)_\star$-monomorphism  by \cite[Corollary 2.21(1)]{GT12} again. Consequently,  $\sigma$ is $(R, S)_\star$-essential. It follows that  $\sigma$ is an upper bound of $\Gamma$ in $\Lambda$.
	It follows by Zorn Lemma that $\Lambda$ has a maximal element. We also it denoted by $\sigma:M\hookrightarrow N.$
	
	\textbf{Claim 2: $\boldsymbol{N}$ is an $\boldsymbol{(R,S)_\star}$-injective module.} First, we will show $N$ has no nontrivial $(R, S)_\star$-essential $(R, S)_\star$-embedding map. Indeed, let $\mu:N\hookrightarrow X$ be an $(R, S)_\star$-essential $(R, S)_\star$-monomorphism. Since $ M^{\ddag}$ is $(R, S)_\star$-injective, there is an $(R,S)_\star$-homomorphism $\xi:X\rightarrow  M^{\ddag}$ such that the following diagram is commutative:
	$$\xymatrix@R=30pt@C=60pt{N\ar@{^{(}->}[r]^{\mu}\ar@{_{(}->}[rd]_{i} &X\ar@{.>}[d]^{\xi} \\
		& M^{\ddag}\\}$$
	Since $\mu$ is  $(R, S)_\star$-essential  and $i=\xi\mu$ is an $(R, S)_\star$-monomorphism, we have $\xi$ is an $(R, S)_\star$-monomorphism. Let $\beta:M\stackrel{ \sigma}{\hookrightarrow}N\stackrel{ \mu}{\hookrightarrow}X\xrightarrow[\cong]{\pi_\xi}\xi(X)$ be the composition map. Then $\beta\in\Gamma$ and $\beta\geq \sigma$. Since $\sigma$ is maximal in $\Gamma$, we have $N=X$. Consequently, $N$ has no nontrivial $(R, S)_\star$-essential $(R, S)_\star$-embedding map. Now, let $\Psi$ the set of $(R, S)_\star$-submodules $H\subseteq  M^{\ddag}$ with $H\cap N=0$. Since $0\in \Psi$, $\Psi$ is nonempty.  Order $\Psi$ as follows $H_i\leq H_j$ if and only if $H_i$ is an $(R, S)_\star$-submodule of  $H_j$. Next, we will show $\Psi$ has a maximal element. Let $\Phi=\{H_i\mid i\in B\}$ be a total ordered subset of $\Psi$. Consider the following commutative diagram:
	$$\xymatrix@R=30pt@C=100pt{H_i\ar@{^{(}->}[d]^{\delta_{ij}}\ar[rdd]^{\alpha_i}\ar@{_{(}->}@/_1pc/[dd]_{\delta_{i}}&\\ H_j\ar@{^{(}->}[d]^{\delta_j}\ar[rd]^{\alpha_j}&\\
		\lim\limits_{\longrightarrow B}H_i\ar[r]^{\alpha}	&  M^{\ddag}\\}$$
	Since each $\alpha_i$ is an $(R, S)_\star$-monomorphism, $\alpha=\lim\limits_{\longrightarrow B}\alpha_i$ is an $(R, S)_\star$-monomorphism by \cite[Corollary 2.21(1)]{GT12}. It is easy to verify $ H'\cap N=0$. Hence,  $ H'\in\Psi$ is an upper bound of $\Phi$. It follows by Zorn Lemma that  $\Psi$ has a maximal element, which is also denoted by $H'$. Then $H'$ is an $(R, S)_\star$-submodule of $ M^{\ddag}$ with $H'\cap N=0$.  So the canonical map $\pi: M^{\ddag}\twoheadrightarrow  M^{\ddag}/H'$ induces  an $(R, S)_\star$-monomorphism
	$\pi|_N:N\hookrightarrow M^{\ddag}/H'$. Then, we will show $\pi|_N$ is $(R, S)_\star$-essential.
	Let $\nu: M^{\ddag}/H'\twoheadrightarrow  M^{\ddag}/H$ be a  natural $(R, S)_\star$-epimorphism such that $\nu\pi_N$ is an $(R, S)_\star$-monomorphism, that is, the following diagram is commutative:
	$$\xymatrix@R=30pt@C=60pt{N\ar@{^{(}->}[r]^{\pi|_N}\ar@{_{(}->}[rd]_{\nu\pi|_N} & M^{\ddag}/H'\ar@{->>}[d]^{\nu} \\
		& M^{\ddag}/H\\}$$
	Then $H\cap N=0,$ and $H$, which contains $H'$, is an $(R, S)_\star$-submodule of $M^{\ddag}$ by \cite[Lemma 2.25(b)]{GT12}. By the maximality of $H'$, we have $H=H'$, and hence $\nu$ is an identity.
	Consequently, $\pi|_N$ is $(R, S)_\star$-essential by Lemma \ref{ess}. Finally, we will show $N$ is $(R, S)_\star$-injective.  Since $N$  has no nontrivial $(R, S)_\star$-essential $(R, S)_\star$-monomorphism as above,
	we have $\pi|_N$ is actually an isomorphism as $R$-modules. Thus the exact sequence
	$$0\rightarrow H'
	\rightarrow
	M^{\ddag}
	\xrightarrow{(\pi|_N)^{-1}\pi} N\rightarrow 0$$
	splits as $R$-modules. It follows that $N$ is an direct summand of $ M^{\ddag}$ as $R$-modules. Consequently, $N$ is an $(R, S)_\star$-injective module by Theorem \ref{rsinj}.
	
	In conclusion,  it follows by Theorem \ref{equal} that $\sigma:M\rightarrow N$ is an $(R, S)_\star$-injective  $(R,S)_\star$-envelope.
\end{proof}

Similar to the proof of results in Section 2, we have the following result.
\begin{theorem}\label{rsinjenve}
	Let  $f:S\rightarrow R$ be a ring extension.	Let $\mathscr{E}_{(R,S)_\star}$ be  the  class  of all $(R,S)_\star$-injective modules and $\mathscr{A}_{(R,S)_\star}$  the  class  of all $(R,S)_\star$-monomorphisms.
	Then $(\mathscr{A}_{(R,S)_\star},\mathscr{E}_{(R,S)_\star})$ is an  injective structure determined by $$\mathscr{F}=\{F\otimes_SR\mid F\ \mbox{is a finitely presented right}\ S\mbox{-module}\}.$$ Consequently, every $R$-module has an $\mathscr{E}_{(R,S)_\star}$-envelope.
\end{theorem}
\begin{proof}
	The proof is similar to these of Proposition \ref{injstru} and Theorem \ref{psprsijn}, and so we omit it.
\end{proof}

Using the above theorem, we find the $(R,S)_\star$-injective envelope of an $R$-module coincides with its $(R,S)_\star$-injective $(R,S)_\star$-envelope.
\begin{corollary}
	Let $\mathscr{E}_{(R,S)_\star}$ be  the  class  of all $(R,S)_\star$-injective modules, and $M$ be an $R$-module. Then	 $\sigma:M\rightarrow E$ is an  $\mathscr{E}_{(R,S)_\star}$-envelope if and only if  it is an $(R,S)_\star$-injective $(R,S)_\star$-envelope.
\end{corollary}
\begin{proof} Let $\sigma':M\rightarrow E'$ be an  $\mathscr{E}_{(R,S)_\star}$-envelope. It follows by the commutative diagram	$$\xymatrix@R=20pt@C=50pt{M\ar[r]^{\sigma'}\ar@{_{(}->}[rd]_{\varepsilon} &E'\ar@{.>}[d]^{h} \\
		& M^{\ddag}\\}$$
that 	$\sigma'$ is an $(R,S)_\star$-monomorphism (see Lemma \ref{rsmon}(3)). Hence, $\sigma'$ is an  $(R,S)_\star$-injective $(R,S)_\star$-envelope.

 On the other hand, let $\sigma:M\rightarrow E$ be an  $(R,S)_\star$-injective $(R,S)_\star$-envelope. It follows by Theorem \ref{rsinjenve} that there is an $\mathscr{E}_{(R,S)_\star}$-envelope $\sigma':M\rightarrow E'$, which is an $(R,S)_\star$-monomorphism. So there are  $(R,S)_\star$-homomorphisms $\tau:E'\rightarrow E$ and $\lambda:E\rightarrow E'$ such that the following diagram is commutative :	$$\xymatrix@R=20pt@C=50pt{
	&E'\ar[d]^{\tau}\\
	M\ar[r]^{\sigma}\ar[ru]^{\sigma'}\ar[rd]_{\sigma'} &E\ar[d]^{\lambda} \\
	& E'\\}$$
Then $\lambda\tau$ is an isomorphism, and so  $\lambda$ is a splitting epimorphism.  Then considering the following commutative diagram:	$$\xymatrix@R=20pt@C=50pt{
	&E\ar[d]^{\lambda}\\
	M\ar[r]^{\sigma'}\ar[ru]^{\sigma}\ar[rd]_{\sigma} &E'\ar[d]^{\delta} \\
	& E\\}$$
we have $\delta$ is  an $(R,S)_\star$-homomorphism by Theorem \ref{rsinj}. So $\delta\lambda$ is also an $(R,S)_\star$-homomorphism. Thus $\delta\lambda$ is an isomorphism. Hence, $\lambda$ is  a splitting monomorphism, and so is an isomorphism. Consequently, $\sigma:M\rightarrow E$ is an  $\mathscr{E}_{(R,S)_\star}$-envelope.
\end{proof}

\end{document}